\theoremstyle{theorem}
\newtheorem{thm}{Theorem}
\newtheorem{prop}{Proposition}
\newtheorem{cor}{Corollary}
\newcommand{\mn}{\mathbb{N}}
\newcommand{\mr}{\mathbb{R}}
\newcommand{\mc}{\mathbb{C}}
\newcommand{\eq}[1]{\begin{equation*}#1\end{equation*}}
\newcommand{\eqn}[1]{\begin{equation}#1\end{equation}}
\newcommand{\aln}[1]{\begin{align}#1\end{align}}
\newcommand{\al}[1]{\begin{align*}#1\end{align*}}
\begin{document}
\title{A Class of Simple Rearrangements of the Alternating Harmonic Series}
\markright{Rearrangements of AHS}
\author{Maxim Gilula}

\maketitle


\begin{abstract}
We present an easily defined countable family of permutations of the natural numbers for which explicit rearrangements (i.e., the sums induced by the permutations) can be computed. The digamma function proves to be the key tool for the computations found here for the alternating harmonic series. The permutations $\phi$ under consideration are simple in a sense: they are involutions ($\phi\circ\phi$ is the identity function). We show that the countable set of rearrangements obtained from the simple involutions considered below are dense in the reals.
\end{abstract}

\section{Introduction.}
One can find Riemann's rearrangement theorem in many standard analysis textbooks, and even in some calculus textbooks (see e.g., \cite[Theorem 3.54]{babyrudin}). A sum $\sum_n a_n$ is said to \textit{conditionally converge} if $\sum_n a_n$ converges, but $\sum_n |a_n|$ diverges. Roughly, Riemann's rearrangement theorem states that given any conditionally convergent series $\sum_n a_n$ of reals $a_n$, and any real number $L$, there is a bijection $\phi$ from the natural numbers to themselves such that $\sum_n a_{\phi(n)}=L.$ Unfortunately, most bijections we can come up with do not give rearrangements that can be explicitly computed, while on the other hand, if we want a particular rearrangement, it is usually too difficult to write down an inducing bijection in closed form. Instead, they are usually defined via some recursive process similar to the one in the proof of Riemann's theorem. The goal of this paper is to present simple number-theoretic permutations that allow us to compute the induced rearrangements of the alternating harmonic series $\sum_{n=1}^\infty (-1)^{n+1}/n$. 

A permutation of order $n$ of a set $S$ is a bijection $\phi:S\to S$ such that $\phi^n=\phi\circ\cdots\circ\phi=\mathbf{1}_S$  and $n$ is the least such with this property, where $\circ$ denotes function composition and $\mathbf{1}_S$ is the identity function on $S$. In 1975, J. H. Smith\cite{jhs75} proved a refinement of Riemann's rearrangement theorem. The refinement implies, for example, that if we consider permutations of order 2 of the natural numbers, the same conclusion holds. That is, given $-\infty\le \alpha\le \beta\le\infty$ and a conditionally convergent series $\sum_n a_n$, there is a permutation $\phi$ of order 2 such that the partial sums $s_N=\sum_{n=1}^N a_{\phi(n)}$ satisfy
\[\liminf_{N\to\infty}s_N =\alpha,\text{ and }\limsup_{N\to\infty}s_N=\beta.\]
Other simple rearrangements have been considered elsewhere, e.g., by Stout\cite{stout} and the comprehensive references therein. Although questions about general rearrangements are very interesting, we instead consider an explicit family of permutations. Section 2 is devoted to studying the digamma function and an explicit class of permutations $\phi$ of order 2 defined by

\[ \phi (n) = \begin{cases} 
c\frac{n-b}{a}+d & \textrm{ if $n\equiv b\bmod a$} \\
a\frac{n-d}{c}+b & \textrm{ if $n\equiv d\bmod c$} \\
n  & \textrm{otherwise}.
\end{cases} \]
For example, consider $\phi$ as above with  $a=2, b=1, c=4, d=2.$ Then $\phi$ swaps odd integers with their double and fixes the remaining integers (those divisible by $4$). In general, whenever $\phi$ as above is a well-defined bijection, it induces a rearrangement of the alternating harmonic series that can be explicitly computed; for example, the above permutation that swaps odd integers with their double induces the rearrangement $\log(2)/4$. Although there are many other permutations of the alternating harmonic series for which one can find closed expressions, these seem to be particularly simple to concoct and write down explicitly. The countable family of such permutations is particularly interesting because it induces rearrangements of the alternating harmonic series that are dense in $\mr,$ which is the best we can hope for. This will be proved below.

Not many explicit rearrangements of even the alternating harmonic series are known, and even fewer rearrangements induced by permutations in closed form can be computed. Almost all examples of rearrangements we can write down come from a permutation defined with just an algorithm. For example, an exercise in Knopp's textbook on infinite series \cite[Ex. 51, p. 150]{knopp90} tells us that if we rearrange the alternating harmonic series by first summing the first $p_1$ consecutive positive terms then the first $p_2$ consecutive negative terms and repeating, then the sum converges to $\log(2)+\log(p_1/p_2)/2.$ \footnote{A solution can be found, for example, in \cite[p. 3]{agnes}. We provide a detailed proof for the case $p_1=1, p_2=4$ in Section 2.2 (C), and easily prove a generalization where we consider an arbitrary $N\ge 1$ and $p_i$ consecutive integers congruent to $i\bmod 2N$ for $1\le i\le 2N$, all thanks to the power of the digamma function.} There are many such exercises in the literature, but very few involve explicit forms for the permutation. As an example, define the permutation $\phi$ on positive integers by
\[ \phi (n) = \begin{cases} 
4\frac{n-1}{2}+2 & \textrm{ if $n\equiv 1\bmod 2$} \\
2\frac{n-2}{4}+1 & \textrm{ if $n\equiv 2\bmod 4$} \\
n  & \textrm{otherwise}.
\end{cases} \]
As noted in the preceding paragraph, the above is just an explicit way of writing down a bijection with the rule \lq\lq if $n$ is odd, double it; if it is a double of an odd number, divide it by two; otherwise, leave it fixed.\rq\rq \space Taking the sum of $(-1)^{\phi(n)+1}/\phi(n),$ the rearrangement is given by
\eq{-\frac{1}{2}+1-\frac{1}{6}-\frac{1}{4}-\frac{1}{10}+\frac{1}{3}-\frac{1}{14}-\frac{1}{8}...}
With some justification (justified below), one can show that this rearrangement converges to the same limit as the series
\eq{\sum_{n=0}^\infty \Bigg(-\frac{1}{2(4n+1)}+\frac{2}{4n+2}-\frac{1}{2(4n+3)}-\frac{1}{4n+4}\Bigg).}
Call this limit $L$. To compute $L$, we attempt to use what we know about the alternating harmonic series: by considering partial sums, $\log(2)$ must be equal to
\al{\lim_{N\to\infty}\sum_{n=0}^{4N}\frac{(-1)^n}{n+1}=&\lim_{N\to\infty}\sum_{n=0}^{2N}\Bigg(\frac{1}{2n+1}-\frac{1}{2n+2}\Bigg)\\ =& \lim_{N\to\infty}\sum_{n=0}^N\Bigg(\frac{1}{4n+1}-\frac{1}{4n+2}+\frac{1}{4n+3}-\frac{1}{4n+4}\Bigg).}
So by the linearity of limits, $L$ must be equal to
\al{\lim_{N\to\infty} \sum_{n=0}^{N} \Bigg(-\frac{1}{2(4n+1)}+\frac{2}{4n+2}&-\frac{1}{2(4n+3)}-\frac{1}{4n+4}\Bigg)\\
	 = \lim_{N\to\infty}-\frac{1}{2}\sum_{n=0}^N\Bigg(\frac{1}{4n+1}-&\frac{1}{4n+2}+\frac{1}{4n+3}-\frac{1}{4n+4}\Bigg)\\
	+\lim_{N\to\infty}\frac{3}{4}\sum_{n=0}^{N}\Bigg(\frac{1}{2n+1}-&\frac{1}{2n+2}=-\frac{1}{2}\log(2)+\frac{3}{4}\log(2)=\frac{\log(2)}{4}\Bigg).}
A more general way to compute $L$ is to play a game of considering the asymptotics of each summand separately. To succeed, it would help to know that $\lim_{N\to\infty}\sum_{n=1}^N 1/n -\log(N)\to \gamma$ as $N\to\infty,$ where $\gamma$ is Euler's constant; the reader should consult the references to see such examples. However, in this paper we avoid these kind of computations by considering the digamma function (and although it lets us kill off any trace of the gamma constant, this is not how this magical function got its name). 
	

\section{Explicit permutations of the alternating harmonic series.}

The main result is to explicitly compute rearrangements of the alternating harmonic series induced by a special class of permutations. Let $\mn$ denote the positive integers. We state the main result of this section:

\begin{thm} Assume $a,b,c,d \in\mn$ satisfy
\begin{itemize}
\item $b<a,$ 
\item $d<c,$ and
\item $\gcd(a,c)$ does not divide $d-b.$  
\end{itemize}
Define $\phi:\mn\to \mn$ by

\[ \phi (n) = \begin{cases} 
      c\frac{n-b}{a}+d & \textrm{ if\, $n\equiv b\bmod a$} \\
      a\frac{n-d}{c}+b & \textrm{ if\, $n\equiv d\bmod c$} \\
	  n  & \textrm{otherwise}.
   \end{cases} \]
Then $\phi$ is a permutation of the positive integers and 
\al{\sum_{n=1}^\infty\frac{(-1)^{\phi(n)+1}}{\phi(n)}&=\\
	\log(2)+&\log\Big(\frac{c}{a}\Big)\frac{(-1)^b+(-1)^{a+b}}{2a}+\log\Big(\frac{a}{c}\Big)\frac{(-1)^{d}+(-1)^{c+d}}{2c}.}
\end{thm}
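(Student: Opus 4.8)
The plan is to first verify that $\phi$ is an involution, and then to compute the rearranged sum by comparing its partial sums, \emph{position by position}, against those of the alternating harmonic series, thereby isolating a pair of ``tail'' sums whose limits the digamma function evaluates exactly.

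\emph{Step 1 (bijection).} First I would use the hypothesis that $\gcd(a,c)$ does not divide $d-b$ to show the two congruence conditions defining $\phi$ are mutually exclusive: a positive integer satisfying both $n\equiv b\bmod a$ and $n\equiv d\bmod c$ exists if and only if $\gcd(a,c)\mid(d-b)$, so under our assumption no $n$ triggers both cases and $\phi$ is well defined. Writing the two relevant progressions as $P_1=\{b+ak:k\ge 0\}$ and $P_2=\{d+ck:k\ge 0\}$ (using $1\le b<a$ and $1\le d<c$), a direct substitution shows $\phi(b+ak)=d+ck$ and $\phi(d+ck)=b+ak$, so $\phi$ swaps $P_1$ and $P_2$ and fixes every other positive integer. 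Hence $\phi\circ\phi=\mathbf{1}_{\mn}$, and $\phi$ is a permutation of order $2$.

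\emph{Step 2 (partial-sum comparison).} Let $s_N=\sum_{n=1}^N (-1)^{\phi(n)+1}/\phi(n)$ and $\sigma_N=\sum_{n=1}^N(-1)^{n+1}/n$, so that $\sigma_N\to\log 2$. Partitioning $\{1,\dots,N\}$ into $P_1$, $P_2$, and the fixed set $R$, the contributions of $R$ to $s_N$ and $\sigma_N$ coincide. For $n=b+ak\in P_1$ (valid for $k\le K_1:=\lfloor(N-b)/a\rfloor$) the rearranged term is $(-1)^{d+ck+1}/(d+ck)$, while for $n=d+ck\in P_2$ (valid for $k\le K_2:=\lfloor(N-d)/c\rfloor$) it is $(-1)^{b+ak+1}/(b+ak)$. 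Collecting the two families of terms I would obtain
\al{ s_N-\sigma_N = \sum_{K_2<k\le K_1}\frac{(-1)^{d+ck+1}}{d+ck}\;-\;\sum_{K_2<k\le K_1}\frac{(-1)^{b+ak+1}}{b+ak}, }
with the convention that a sum over a reversed range is the appropriately signed sum (this treats $a\le c$ and $a>c$ uniformly). Since $\sigma_N\to\log 2$, proving that $s_N$ converges and evaluating $S$ reduces to computing the limits of these two signed tails as $N\to\infty$, where $K_1\sim N/a$ and $K_2\sim N/c$.

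\emph{Step 3 (digamma evaluation).} I would evaluate each tail from the identity $\sum_{k=0}^{K-1}1/(p+qk)=\tfrac1q\big(\psi(K+p/q)-\psi(p/q)\big)$ together with $\psi(x)=\log x+O(1/x)$. For a modulus $q\in\{a,c\}$ with offset $p\in\{b,d\}$: if $q$ is even the sign $(-1)^{qk}$ is constant, and the tail over $K_2<k\le K_1$ behaves like $\tfrac{(-1)^{p+1}}{q}(\log K_1-\log K_2)\to \tfrac{(-1)^{p+1}}{q}\log(c/a)$; if $q$ is odd the sign is $(-1)^k$ and the tail of the resulting conditionally convergent alternating series vanishes. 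Both cases are captured by the factor $\tfrac{1+(-1)^q}{2}$, so the first tail tends to $\tfrac{(-1)^{d}+(-1)^{c+d}}{2c}\log(a/c)$ while the negative of the second tends to $\tfrac{(-1)^{b}+(-1)^{a+b}}{2a}\log(c/a)$. Adding both limits to $\log 2$ yields exactly the claimed value of $S$.

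\emph{Main obstacle.} The genuinely delicate point is the interplay of Steps 2 and 3: because the individual progression sums over $P_1$ and $P_2$ diverge when $a$ or $c$ is even, one cannot split the rearranged series termwise, and the value of $S$ arises \emph{precisely} from the mismatch between the cutoffs $K_1\sim N/a$ and $K_2\sim N/c$. I expect the main work to be showing that $s_N$ has a genuine limit through all integers $N$ (not merely along a subsequence of block boundaries); this follows because the terms tend to $0$ and the digamma asymptotics force the tail sums to converge, but the bookkeeping of the floor functions and the even/odd dichotomy of $a$ and $c$ must be carried out carefully.
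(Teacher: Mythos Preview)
Your proposal is correct and takes a genuinely different route from the paper. The paper works with blocks of length $2ac$: it rewrites each block of the rearranged series as a sum of five finite pieces $S_{(a,a)}(k)+S_{(c,c)}(k)+S_{(c,a)}(k)+S_{(a,c)}(k)+AH(k)$, checks that the coefficient condition of Proposition~3 holds, applies that proposition to express each piece as a combination of digamma values, and then invokes the multiplication formula (Proposition~2\,$(iii)$) so that all $\psi$-values cancel and only the logarithms remain; convergence through all $N$ is then recovered via Proposition~1. Your argument instead compares $s_N$ to $\sigma_N$ for \emph{every} $N$, producing an exact identity $s_N-\sigma_N=$ (two tail sums over $K_2<k\le K_1$), and evaluates those tails directly from the asymptotic $\psi(x)=\log x+o(1)$ together with the alternating-series bound in the odd-modulus case. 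This is more elementary---it never touches the multiplication formula and never introduces digamma values that must later cancel---and it sidesteps the subsequence issue you flag in your ``main obstacle'' paragraph, since your identity already holds at every $N$. The paper's block machinery, on the other hand, is more mechanical and extends verbatim to the order-$m$ permutations of Corollary~2, where your pairwise tail-comparison would need to be iterated over the $m$ progressions.
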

\noindent The conditions $b<a, d<c$ are artificial and only help simplify the proof, but the divisibility condition on $d-b$ is necessary and sufficient for bijectivity.

We first discuss some basic facts about series and the digamma function.

\subsection{Basic facts and the digamma function}

We start with a result relating the convergence of the series $\sum_{i=0}^\infty d_i$ with the convergence of the series $\sum_{i=0}^\infty\sum_{j=0}^{n-1} d_{ni+j}.$ In particular, we can often take a conditionally convergent series and instead work with an absolutely convergent series by summing over these \lq\lq blocks.\rq\rq 

\begin{prop} Let $\{a_i\}_{i=0}^\infty$ be a sequence of reals converging to $0.$ Fix $n\in\mn$. $\sum_{i=0}^\infty a_i$ converges if and only if $\sum_{i=0}^\infty\sum_{j=0}^{n-1} a_{ni+j}$ converges. If the sums converge, they are equal.
\label{blocks}
\end{prop}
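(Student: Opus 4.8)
The plan is to recognize that the partial sums of the block series form a subsequence of the partial sums of the original series, and then to treat the two implications separately, using the hypothesis $a_i\to 0$ only for the harder direction. Writing $s_M=\sum_{i=0}^{M}a_i$ for the partial sums of $\sum_i a_i$ and $T_K=\sum_{i=0}^{K}\sum_{j=0}^{n-1}a_{ni+j}$ for those of the block series, the first step is the bookkeeping identity
\[
T_K=\sum_{i=0}^{K}\sum_{j=0}^{n-1}a_{ni+j}=\sum_{m=0}^{n(K+1)-1}a_m=s_{n(K+1)-1},
\]
which holds because the index pairs $(i,j)$ with $0\le i\le K$ and $0\le j\le n-1$ run bijectively over the integers $m$ from $0$ to $n(K+1)-1$. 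Thus $\{T_K\}_{K\ge 0}$ is exactly the subsequence $\{s_{n(K+1)-1}\}_{K\ge 0}$ of $\{s_M\}_{M\ge 0}$.

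Given this identity, the forward direction is immediate: if $\sum_i a_i$ converges to some limit $L$, then $s_M\to L$, so every subsequence converges to $L$ as well, and in particular $T_K=s_{n(K+1)-1}\to L$. Note this direction does not use the hypothesis $a_i\to 0$ at all.

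For the converse, suppose $T_K\to L$; the goal is to upgrade convergence of the subsequence $\{s_{n(K+1)-1}\}$ to convergence of the entire sequence $\{s_M\}$, and this is precisely where $a_i\to 0$ enters. For $M$ large, write $M=nK+r$ with $0\le r\le n-1$, so that
\[
s_M=s_{nK-1}+\sum_{j=0}^{r}a_{nK+j}=T_{K-1}+\sum_{j=0}^{r}a_{nK+j}.
\]
The correction term is a sum of at most $n$ terms $a_{nK+j}$, each indexed by a quantity tending to infinity as $M\to\infty$; since $n$ is fixed and $a_i\to 0$, this correction tends to $0$. As $M\to\infty$ we have $K=\lfloor M/n\rfloor\to\infty$, hence $T_{K-1}\to L$, and therefore $s_M\to L$. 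This simultaneously establishes that $\sum_i a_i$ converges and that its sum equals the common value $L$ of the block series.

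I expect the only genuine subtlety to be this converse step: the partial-sum identity by itself delivers convergence merely along the block-boundary subsequence, and one must argue that the finitely many leftover terms squeezed between consecutive boundaries vanish in the limit — which is exactly the role of the hypothesis $a_i\to 0$. The forward implication and the index bookkeeping in the first step are routine, and the edge case of small $M$ (where $T_{K-1}$ is an empty sum) is irrelevant since only the limit $M\to\infty$ matters.
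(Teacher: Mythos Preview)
Your proof is correct and follows essentially the same idea as the paper: the block partial sums $T_K$ coincide with the subsequence $s_{n(K+1)-1}$ of the full partial sums, which immediately gives the forward implication, and the hypothesis $a_i\to 0$ is exactly what upgrades subsequential convergence to full convergence for the converse. The only cosmetic difference is that the paper's one-line sketch phrases the converse via the Cauchy criterion (a Cauchy sequence with a convergent subsequence converges), whereas you argue directly by bounding the at most $n$ leftover terms; the content is the same.
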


This follows from a standard result about Cauchy sequences in a metric space: if a subsequence converges, so does the sequence, and they converge to the same limit. Now we provide three key properties of the digamma function used in the proof of Theorem 1. First, we define the digamma function $\psi:(0,\infty)\to\mr$ by
	\eq{\psi(x):=\frac{d}{dx}\log\Gamma(x)=\frac{\Gamma'(x)}{\Gamma(x)},}
where one of many definitions of the standard gamma function $\Gamma(x)$ is given by
\eq{\Gamma(x):=\lim_{n\to\infty}\frac{(n-1)!n^x}{x(x+1)\cdots(x+n-1)}.}
This definition of $\Gamma(x)$ is particularly nice because it makes sense for all $x\in\mc$ except the nonpositive integers.

We reference a formula for the digamma function that is much more useful for our purposes:
\eq{\psi(x)=-\gamma+\sum_{n=0}^\infty \Bigg(\frac{1}{n+1}-\frac{1}{n+x}\Bigg).}
Here $\gamma$ is the Euler--Mascheroni constant (also called Euler's constant). This expression for digamma is deduced in \cite[Eq. (19), p. 138]{digamma1} from the definition above. This expression motivates the use of the digamma function in our calculations: the above is directly related to the harmonic series and the behavior of $\log(N)-\sum_{n=1}^N 1/n$ as $N$ grows to infinity. We summarize the properties of digamma needed for proving Theorem 1:

\begin{prop} Let $\psi$ be the digamma function for positive real values as defined above.
\begin{itemize}
\item[(i)] Let $a,b, N\in\mn.$ \eq{\sum_{n=0}^{N-1}\frac{1}{an+b}=\frac{\psi(b/a+N)-\psi(b/a)}{a}.} 
\item[(ii)] For all $s>0,$ 
\eq{\lim_{x\to\infty}\psi(x+s)-\log(x)=0.} 
\item[(iii)] Let $q$ be a positive rational and $m\in\mn.$ Then \eq{\displaystyle\sum_{j=0}^{m-1}\psi\Big(q+\frac{j}{m}\Big)=m(\psi(qm)-\log(m)).} 
\end{itemize}
\label{gamprop}
\end{prop}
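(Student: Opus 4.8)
The plan is to derive all three identities from the single series representation $\psi(x)=-\gamma+\sum_{n=0}^\infty\big(\frac{1}{n+1}-\frac{1}{n+x}\big)$ quoted above, which converges for every real $x>0$. The computation underlying both (i) and (iii) is the telescoping identity
\[\psi(x+N)-\psi(x)=\sum_{n=0}^{N-1}\frac{1}{n+x}\qquad(x>0,\ N\in\mn),\]
which I would record as a preliminary lemma: subtracting the two series, the $\frac{1}{n+1}$ terms cancel and what remains is $\sum_{n=0}^\infty\big(\frac{1}{n+x}-\frac{1}{n+x+N}\big)$, whose $M$-th partial sum equals $\sum_{n=0}^{N-1}\frac{1}{n+x}-\sum_{n=M+1}^{M+N}\frac{1}{n+x}$, and the tail vanishes as $M\to\infty$. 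For (i) I then specialize to $x=b/a$ and factor $\sum_{n=0}^{N-1}\frac{1}{n+b/a}=a\sum_{n=0}^{N-1}\frac{1}{an+b}$, so dividing by $a$ yields $\sum_{n=0}^{N-1}\frac{1}{an+b}=\tfrac1a\big(\psi(b/a+N)-\psi(b/a)\big)$.

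For (ii) I first read off monotonicity directly from the series: for $y>x>0$, $\psi(y)-\psi(x)=\sum_{n=0}^\infty\big(\frac{1}{n+x}-\frac{1}{n+y}\big)>0$, and $\log$ is increasing as well. Next I pin down the integer asymptotics. Telescoping the series at $x=m\in\mn$ gives $\psi(m)=-\gamma+H_{m-1}$ with $H_{m-1}=\sum_{k=1}^{m-1}1/k$; combined with the classical fact $H_m-\log m\to\gamma$ recalled in the introduction, this yields $\psi(m)-\log m\to0$. To pass to real $x$, I take $m=\lfloor x\rfloor$ and squeeze: monotonicity gives $\psi(m)-\log(m+1)\le\psi(x)-\log x\le\psi(m+1)-\log m$, and both outer expressions tend to $0$ because $\log(m+1)-\log m\to0$. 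Finally, writing $\psi(x+s)-\log x=\big(\psi(x+s)-\log(x+s)\big)+\log(1+s/x)$ shows the shift by $s>0$ is harmless and delivers the stated limit.

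For (iii), the Gauss multiplication formula, I would sum the telescoping lemma over the shifted arguments $q+\tfrac jm$ and reindex. Since $\frac{1}{n+q+j/m}=\frac{m}{mn+j+mq}$ and $k=mn+j$ runs over $0,1,\dots,mN-1$ exactly once as $(n,j)$ ranges over $\{0,\dots,N-1\}\times\{0,\dots,m-1\}$, we get
\[\sum_{j=0}^{m-1}\Big(\psi\big(q+\tfrac jm+N\big)-\psi\big(q+\tfrac jm\big)\Big)=m\sum_{k=0}^{mN-1}\frac{1}{k+mq}=m\big(\psi(mq+mN)-\psi(mq)\big),\]
again by the lemma. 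Rearranging and letting $N\to\infty$ while applying (ii), each $\psi(q+\tfrac jm+N)=\log N+o(1)$ and $\psi(mq+mN)=\log(mN)+o(1)=\log m+\log N+o(1)$, so the divergent $m\log N$ contributions cancel and leave $\sum_{j=0}^{m-1}\psi(q+\tfrac jm)=m\psi(mq)-m\log m=m(\psi(qm)-\log m)$.

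The main obstacle, as I see it, is concentrated in (ii) and its use in (iii): proving the asymptotic $\psi(x)\sim\log x$ is what forces me to import $H_m-\log m\to\gamma$ and to bridge integer and real arguments via monotonicity, and the limit in (iii) is delicate only because the individually divergent pieces $m\log N$ must be seen to cancel \emph{exactly} before the finite answer emerges. Once the reindexing $k=mn+j$ is in hand, the remainder of (iii) is bookkeeping; I also note that rationality of $q$ plays no essential role in this argument and could be dropped.
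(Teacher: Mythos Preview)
Your argument is correct. For (i) you do exactly what the paper does: establish the telescoping identity $\psi(x+N)-\psi(x)=\sum_{n=0}^{N-1}\frac{1}{n+x}$ and then set $x=b/a$; the only cosmetic difference is that the paper reaches this identity by quoting the formula $\psi(x+s)-\psi(x)=\sum_{n\ge0}\big(\frac{1}{n+x}-\frac{1}{n+x+s}\big)$ from Jensen--Gronwall and specializing $s=N$, while you obtain it directly from the series representation.

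Where you genuinely differ is in (ii) and (iii): the paper simply cites the Jensen--Gronwall exposition for both, whereas you supply self-contained proofs. Your route to (ii) --- monotonicity from the series, the exact value $\psi(m)=-\gamma+H_{m-1}$, the input $H_m-\log m\to\gamma$, and a sandwich between integer arguments --- is standard and clean. Your route to (iii) is the more interesting contribution: summing the telescoping lemma over $j$, reindexing $k=mn+j$ to collapse the double sum, and then letting $N\to\infty$ so that (ii) cancels the $m\log N$ contributions exactly is a tidy elementary derivation of the Gauss multiplication formula that avoids any appeal to the product definition of $\Gamma$. What you buy is self-containment and a clear picture of why the $\log m$ appears; what the paper buys is brevity by outsourcing to a reference. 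Your closing remark that rationality of $q$ is irrelevant is also correct --- the identity holds for all $q>0$, and the paper's hypothesis is stronger than necessary.
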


\begin{proof}
	We deduce $(i)$ from \cite[Eq. (19$'''$), p. 139]{digamma1}. By equation (19$'''$) we know that for $x>0$,
	\eq{\psi(x+s)-\psi(x)=\sum_{n=0}^\infty\frac{s}{(n+x+s)(n+x)}=\sum_{n=0}^\infty\Bigg(\frac{1}{n+x}-\frac{1}{n+x+s}\Bigg).}
	By considering partial sums and letting $s=N\in\mn$ above, 
	\eq{\psi(x+N)-\psi(x)=\sum_{n=0}^{N-1}\frac{1}{n+x}.}
	Letting $x=b/a$ we get the desired equation.
	
	The proof of $(ii)$ can be found in the steps preceding \cite[Eq. (21), p. 139]{digamma1}.
	
	Finally, $(iii)$ can also be found in \cite[Eq. (23), p. 140]{digamma1}.
	
\end{proof}
We continue with one of the most necessary facts about explicit convergence of series for the proof of Theorem 1.
\begin{prop} For $1\le i\le n$ let $a_i, b_i\in \mn$ and $c_i\in\mr$ satisfy
	\eq{\frac{c_1}{a_1}+\cdots+\frac{c_n}{a_n}=0.}
Then the series
	\eq{\sum_{k=0}^\infty\Bigg(\frac{c_1}{a_1k+b_1}+\cdots+\frac{c_n}{a_nk+b_n}\Bigg)}
converges absolutely to \eq{-\sum_{i=1}^n \frac{c_i}{a_i}\psi\Big(\frac{b_i}{a_i}\Big).}
\end{prop}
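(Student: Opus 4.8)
The plan is to prove the two assertions --- absolute convergence and the evaluation of the sum --- separately, with the hypothesis $c_1/a_1+\cdots+c_n/a_n=0$ doing the essential work in each. For absolute convergence I would estimate the $k$-th grouped term asymptotically. Since every $b_i\ge 1$, each denominator $a_ik+b_i$ is positive for $k\ge 0$, so no term is singular. Writing $\beta_i=b_i/a_i>0$ and using the identity $\tfrac{1}{a_ik+b_i}=\tfrac{1}{a_i}\big(\tfrac{1}{k}-\tfrac{\beta_i}{k(k+\beta_i)}\big)$, the bracketed summand for $k\ge 1$ becomes
\[\sum_{i=1}^n\frac{c_i}{a_ik+b_i}=\frac{1}{k}\sum_{i=1}^n\frac{c_i}{a_i}-\sum_{i=1}^n\frac{c_ib_i}{a_i^2\,k(k+\beta_i)}.\]
The first sum vanishes identically by hypothesis, so the whole expression is $O(1/k^2)$; comparison with $\sum_k 1/k^2$ then gives absolute convergence, the lone $k=0$ term being finite and harmless.

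For the value I would compute the partial sums directly. For finite $N$ the double sum $S_N=\sum_{k=0}^{N-1}\sum_{i=1}^n \tfrac{c_i}{a_ik+b_i}$ may be reordered, and applying part (i) of Proposition \ref{gamprop} to each inner sum gives
\[S_N=\sum_{i=1}^n\frac{c_i}{a_i}\psi\Big(\frac{b_i}{a_i}+N\Big)-\sum_{i=1}^n\frac{c_i}{a_i}\psi\Big(\frac{b_i}{a_i}\Big).\]
The second sum is already the claimed limit (up to its sign), so everything reduces to showing that the first sum tends to $0$ as $N\to\infty$.

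This last step is the crux, and is exactly where the constraint $\sum_i c_i/a_i=0$ becomes indispensable: each $\psi(b_i/a_i+N)$ individually grows like $\log N$, so only a cancellation among the terms can succeed. The trick I would use is to subtract $\log(N)\sum_i c_i/a_i$, which is zero by hypothesis and so costs nothing, and then regroup:
\[\sum_{i=1}^n\frac{c_i}{a_i}\psi\Big(\frac{b_i}{a_i}+N\Big)=\sum_{i=1}^n\frac{c_i}{a_i}\Big(\psi\Big(\frac{b_i}{a_i}+N\Big)-\log N\Big).\]
Applying part (ii) of Proposition \ref{gamprop} with $x=N$ and $s=b_i/a_i>0$, each bracket tends to $0$, and hence so does the finite sum. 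Therefore $S_N\to-\sum_{i=1}^n(c_i/a_i)\psi(b_i/a_i)$; since absolute convergence has already been shown, this limit of partial sums is the value of the series, completing the argument.
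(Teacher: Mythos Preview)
Your proof is correct and follows essentially the same route as the paper's: compute partial sums via Proposition~\ref{gamprop}(i), then subtract the zero quantity $\log(N)\sum_i c_i/a_i$ and apply Proposition~\ref{gamprop}(ii) to kill the remaining terms. The only difference is that you actually supply the $O(1/k^2)$ estimate for absolute convergence, which the paper leaves as an exercise.
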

\begin{proof}
Although absolute convergence holds, we do not use it for the rest of the article, so it is left as an exercise.

Next, we use property $(i)$ in Proposition 2 to get
\eq{\sum_{k=0}^{N-1}\sum_{i=1}^n \frac{c_i}{a_ik+b_i}=-\sum_{i=1}^n\frac{c_i}{a_i} \psi\Big(\frac{b_i}{a_i}\Big)+\sum_{i=1}^n\frac{c_i}{a_i}\psi\Big(\frac{b_i}{a_i}+N\Big).}
Therefore all we need to show is that 
\eq{\sum_{i=1}^n \frac{c_i}{a_i}\psi\Big(\frac{b_i}{a_i}+N\Big)\to 0 \text{ as } N\to\infty.}
But
\eq{\sum_{i=1}^n \frac{c_i}{a_i}\psi\Big(\frac{b_i}{a_i}+N\Big)=\sum_{i=1}^n\frac{c_i}{a_i}\left[\psi\Big(\frac{b_i}{a_i}+N\Big)-\log(N)\right]+\sum_{i=1}^n\frac{c_i}{a_i}\log(N).}
For all $N$ we know
\eq{\sum_{i=1}^n\frac{c_i}{a_i}\log(N)=\log(N)\sum_{i=1}^n\frac{c_i}{a_i}=\log(N)\cdot 0=0}
by the condition on the $c_i/a_i.$ Finally, by property $(ii)$ of Proposition 2,
\eq{\lim_{N\to\infty}\sum_{i=1}^n\frac{c_i}{a_i}\left[\psi\Big(\frac{b_i}{a_i}+N\Big)-\log(N)\right]=\sum_{i=1}^n \frac{c_i}{a_i} \lim_{N\to\infty} \left[\psi\Big(\frac{b_i}{a_i}+N\Big)-\log(N)\right]=0.}
\end{proof}
Another standard result about digamma function follows easily from this proposition:
\begin{cor}
	For all positive integers $N,$ \[\frac{1}{2N}\sum_{j=1}^{2N}(-1)^j \psi\left(\frac{j}{2N}\right)=\log(2).\]
\end{cor}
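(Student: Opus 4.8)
The plan is to read the corollary directly off Proposition 3 by choosing its parameters so that the right-hand side of that proposition becomes exactly (the negative of) the sum we want. Concretely, I would apply Proposition 3 with $n=2N$, and for $1\le i\le 2N$ take $a_i=2N$, $b_i=i$, and $c_i=(-1)^i$. The hypothesis demands $\sum_{i=1}^{2N}c_i/a_i=0$; here this sum is $\frac{1}{2N}\sum_{i=1}^{2N}(-1)^i$, which vanishes because an even number of consecutive alternating signs cancel in pairs. With these choices the right-hand side $-\sum_i \frac{c_i}{a_i}\psi(b_i/a_i)$ of Proposition 3 is precisely $-\frac{1}{2N}\sum_{i=1}^{2N}(-1)^i\psi\!\left(\frac{i}{2N}\right)$, i.e., the negative of the quantity under consideration.

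It then remains to identify the left-hand side of Proposition 3, namely the grouped double series $\sum_{k=0}^\infty\sum_{i=1}^{2N}\frac{(-1)^i}{2Nk+i}$. The key observation is that as $k$ ranges over the nonnegative integers and $i$ over $\{1,\dots,2N\}$, the quantity $2Nk+i$ runs through every positive integer exactly once, and moreover $(-1)^i=(-1)^{2Nk+i}$ because $2Nk$ is even. Hence each inner block is exactly a block of $2N$ consecutive terms of the series $\sum_{m=1}^\infty (-1)^m/m$. By Proposition 1, applied with block size $2N$ to the sequence $a_m=(-1)^m/m$ (which tends to $0$), this grouped sum converges and equals the full series $\sum_{m=1}^\infty (-1)^m/m=-\log 2$.

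Combining the two computations gives $\frac{1}{2N}\sum_{i=1}^{2N}(-1)^i\psi\!\left(\frac{i}{2N}\right)=-(-\log 2)=\log 2$, which is the claim. The only step requiring genuine care is the passage from the grouped double series back to the ordinary alternating harmonic series: one must justify that regrouping into blocks of size $2N$ does not change the value. This is handled cleanly by Proposition 1 (or alternatively by the absolute convergence asserted in Proposition 3), so I do not anticipate a real obstacle. In essence the corollary is a matched specialization of Proposition 3 together with the blocking principle, and the whole argument is a short chain of identifications rather than a computation.
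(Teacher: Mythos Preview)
Your proposal is correct and follows essentially the same route as the paper: apply Proposition~3 with $a_i=2N$, $b_i=i$, $c_i=(-1)^i$ (the paper uses $(-1)^{i+1}$, an immaterial sign flip), and then invoke Proposition~1 to identify the resulting block sum with the alternating harmonic series. The only cosmetic difference is order of presentation---the paper starts from $\log(2)$ and rewrites it via Propositions~1 and~3, while you start from Proposition~3 and recognize its left side as $-\log(2)$---but the content is identical.
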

\begin{proof}
We can partition integers into consecutive integers of size $2N.$ Proposition 1, the alternating harmonic series is equal to \[\log(2)=\sum_{k=0}^\infty \sum_{j=1}^{2N} \frac{(-1)^{2Nk+j+1}}{2Nk+j}.\] By Prop 3 this equals
\[-\sum_{j=1}^{2N}\frac{(-1)^{j+1}}{2N}\psi\left(\frac{j}{2N}\right),\]
which is exactly the sum we want. 
\end{proof}
We now take our attention away from the main goal to argue why the digamma function is so nice when dealing with rearrangements of the alternating harmonic series.

\subsection{Examples and implications}
\begin{itemize}
	\item[(A)] If $a$ and $c$ are both odd, then 
	\eq{\log(2)+\log\Big(\frac{c}{a}\Big)\frac{(-1)^b+(-1)^{a+b}}{2a}+\log\Big(\frac{a}{c}\Big)\frac{(-1)^d+(-1)^{c+d}}{2c}=\log(2).}
	Although this may seem mysterious if we do not know how to obtain the explicit formula, there is a nice way to see the fact above by simply subtracting the partial sums of the rearranged series and the original series and showing that this difference tends to $0$ in the limit. Splitting the difference of the partial sums in a convenient way is a standard technique for dealing with permutations. So, we write
	
	\aln{  \sum_{n=1}^N  a_n - \sum_{n=1}^N& a_{\phi(n)}\nonumber\\
		=  \sum_{\phi(n)>N}^N a_n+& \sum_{\phi(n)\leq N}^N a_n -\sum_{\phi(n)>N}^N a_{\phi(n)}-\sum_{\phi(n)\leq N}^N a_{\phi(n)}\nonumber\\ 
		&\text{ } = \sum_{\phi(n)>N}^N (a_n-a_{\phi(n)})\label{inout}.}
	
	Letting $a,$ $c$ be odd, we can show the sum (\ref{inout}) tends to $0$ as $N\to\infty$, and therefore the permutation does not change the sum. This gives a class of explicit permutations one might expect \textit{a priori} to induce rearrangements of this series different from $\log(2)$.
	
	\item[(B)] Rearrangements of the alternating harmonic series given in Theorem 1 are dense in $\mr.$  Namely, given $a,b,c,d$ positive integers, the image of the function $f(a,b,c,d)$ defined by 
	\eq{f(a,b,c,d)=\log\Big(\frac{c}{a}\Big)\frac{(-1)^b+(-1)^{a+b}}{2a}+\log\Big(\frac{a}{c}\Big)\frac{(-1)^d+(-1)^{c+d}}{2c},\nonumber}
	restricted to the set where $a,b,c,d$ satisfy the conditions of Theorem 1, is dense in $\mr.$ In order to prove this, one needs to prove density of, e.g., 
	\eq{\frac{\log(n/m)}{m}+\frac{\log(m/n)}{n}}
	in $\mr$ for $n,m\in\mn.$ Once this is proved, choosing $a=6m+3,$ $b=1,$ $c=6n,$ and $d=2,$ (so $\gcd(a,c)\neq 1=d-b)$, one can show density of the rearrangements in $(0,\infty).$ Swapping $b$ and $d$ provides the same proof for $(-\infty, 0).$ This is just a matter of sorting out epsilons and deltas: the proof is elementary but may require a page of computations.
	
	\item[(C)] Now we give an example of a real number that is not equal to a rearrangement guaranteed by Theorem 1: analyzing parities of $a,b,c,d,$ we can deduce that no rearrangement induced by involutions as in the main theorem can equal $0.$ However, there are simple rearrangements of the alternating harmonic series that converge to $0$ described by an algorithm: consider the rearrangement defined by the first positive term followed by the first four consecutive negative terms, and repeat. That is, consider the series
	\eq{\textcolor{ForestGreen}{1}\textcolor{red}{-\frac{1}{2}-\frac{1}{4}-\frac{1}{6}-\frac{1}{8}}\textcolor{ForestGreen}{+\frac{1}{3}}\textcolor{red}{-\frac{1}{10}-\frac{1}{12}-\frac{1}{14}-\frac{1}{16}}\textcolor{ForestGreen}{+\frac{1}{5}}\textcolor{red}{-}...,}
	which converges to $0.$ We provide a standard computation here without using digamma function. By Proposition 1, this sum converges to the same limit as the sum
	\eq{\sum_{n=0}^\infty\Bigg(\frac{1}{2n+1}-\frac{1}{8n+2}-\frac{1}{8n+4}-\frac{1}{8n+6}-\frac{1}{8n+8}\Bigg).}
	The partial sums may be expressed as
	\al{ & \sum_{n=0}^N\Bigg(\frac{1}{2n+1}-\frac{1}{8n+2}-\frac{1}{8n+4}-\frac{1}{8n+6}-\frac{1}{8n+8}\Bigg)\\
		& =  \sum_{n=0}^N\frac{1}{2n+1}-\frac{1}{2}\sum_{n=0}^{N}\Bigg(\frac{1}{4n+1}+\frac{1}{4n+2}+\frac{1}{4n+3}+\frac{1}{4n+4}\Bigg)\\
		& =  \sum_{n=1}^{2N+1}\frac{1}{n}-\frac{1}{2}\sum_{n=1}^{N}\frac{1}{n} -\frac{1}{2}\sum_{n=1}^{4N+4}\frac{1}{n}.}
	Now we add and subtract $\log$ terms to exploit the behavior of the alternating harmonic series: the above equals
	\al{\Bigg(\sum_{n=1}^{2N+1}\frac{1}{n}-\log(2N+1)\Bigg)+&\frac{1}{2}\Bigg(\log(N)-\sum_{n=1}^{N}\frac{1}{n}\Bigg)\\+ \frac{1}{2}\Bigg(&\log(4N+4)-\sum_{n=1}^{4N+4}\frac{1}{n}\Bigg)+\log\Bigg(\frac{2N+1}{2\sqrt{N^2+N}}\Bigg).}
	As $N\to\infty,$ the above converges to $\gamma-\gamma/2-\gamma/2+\log(1)=0.$
	
	It is still open whether there are permutations expressible in closed form such that the induced rearrangement of the alternating harmonic series converges to $0.$ Computing this way does not feel satisfying.
	\item[(D)] Using Proposition 3 instead of \textit{ad hoc} methods in (C), we see right away that this rearrangement sums to \al{-\frac{1}{2}\psi\left(\frac{1}{2}\right)+ \sum_{j=0}^{4-1}\frac{1}{8}\psi\left(\frac{1}{4}+\frac{j}{4}\right)\stackrel{Prop. 2(iii)}{=}& \\ -\frac{1}{2}\psi\left(\frac{1}{2}\right)+ \frac{4}{8}\left[\psi\left(\frac{4}{4}\right)-\log(4)\right]&=
	  -\log(2)+\frac{\psi(1)-\psi(1/2)}{2}.} By Corollary 1 with $N=1$, this is just $-\log(2)+\log(2)=0.$ Using the same tools, we prove a  generalization of the sum $p_1$ positives followed by $p_2$ negatives, and repeat, which is equal to \[\textcolor{ForestGreen}{1+\frac{1}{3}+\cdots +\frac{1}{2p_1-1}}\textcolor{red}{-\frac{1}{2}-\cdots -\frac{1}{2p_2}}\textcolor{ForestGreen}{+}\cdots = \log(2)+\frac{1}{2}\log\left(\frac{p_1}{p_2}\right).\]
  
  		Claim: The sum of $p_1$ consecutive terms $1\bmod 2N$ plus the sum of $p_2$ consecutive terms $2\bmod 2N$ plus $\dots$ plus the sum of $p_{2N}$ consecutive terms $2N\bmod 2N$ is equal to \[\log(2)+\frac{1}{2N}\log\left(\frac{p_1p_3\cdots p_{2N-1}}{p_2p_4\cdots p_{2N}}\right).\]
  		
  		\begin{proof}
  			By Proposition 1, we can write the sum from the claim as \[\sum_{n=0}^\infty\sum_{i=1}^{2N}\sum_{j=0}^{p_i-1} \frac{(-1)^{i+1}}{2Np_in+2Nj+i}.\]
  			The inner most sum represents $p_i$ consecutive terms $i\bmod 2N,$ and the second sum is over each of the chosen equivalence classes $\bmod$ $2N;$ then we need to sum all of these.
  			
  			Verifying the hypothesis of Proposition 3 that \[\sum_{i=1}^{2N}\sum_{j=0}^{p_i-1}\frac{(-1)^{i+1}}{2Np_i}=\sum_{i=1}^{2N} \frac{(-1)^{i+1}}{2N}=0,\] it guarantees our sum converges to
  			\[-\sum_{i=1}^{2N}\sum_{j=0}^{p_i-1}\frac{(-1)^{i+1}}{2Np_i}\cdot\psi\left(\frac{2Nj+i}{2Np_i}\right).\] Next, Prop. 2$(iii)$ guarantees \[\sum_{j=0}^{p_i-1}\psi\left(\frac{2Nj+i}{2Np_i}\right)=\sum_{j=0}^{p_i-1}\psi\left(\frac{j}{p_i}+\frac{i}{2Np_i}\right)=p_i\left[\psi\left(\frac{i}{2N}\right)-\log(p_i)\right],\]
  			so that
  			\[\sum_{i=1}^{2N}\frac{(-1)^i}{2Np_i}\cdot p_i \left[\psi\left(\frac{i}{2N}\right)-\log(p_i)\right]=\]\[\frac{1}{2N}\sum_{i=1}^{2N}(-1)^i \psi\left(\frac{i}{2N}\right)+\frac{1}{2N}\sum_{i=1}^{2N}(-1)^{i+1}\log(p_i).\]
  			But Corollary 1 and basic properties of $\log$ simplify this to what we need:
  			\[\log(2)+\frac{1}{2N}\log\left(\frac{p_1p_3\cdots p_{2N-1}}{p_2p_4\cdots p_{2N}}\right).\]
  		\end{proof}
	The last example gives a flavor of the proof of Theorem 1. In some ways the proof of Theorem 1 is easier, and in some, harder. More detail is given below for how to think about summing over partitions.
\end{itemize}

\subsection{The digamma function and rearrangements of the alternating harmonic series}
Now we move on to our proof of Theorem 1.

\begin{proof}
Partitioning $\mn$ into sets of those elements that are $b\bmod a,$ $d\bmod c,$ and neither,  assuming $\gcd(a,c)$ does not divide $d-b,$ one can show $\phi$ is a well-defined bijection. 

We now split $\mn$ into equivalence classes modulo $2ac: \{2ack+1\},...,\{2ack+2ac\}.$ We choose $2ac$ because it is even and is divisible by $\gcd(a,c),$ two properties sufficient for capturing everything that is going on in the partial sums of our rearrangement. The philosophy behind computing the rearrangements is to consider partial sums of the original series of height $2acN$, see which terms are brought into the partial sum by the permutation, which are taken out, and which ones are not moved. Let's consider the partial sums of the alternating harmonic series and split the partial sums into blocks:
\eq{\sum_{k=1}^{2acN} \frac{(-1)^{k+1}}{k}=\sum_{k=0}^{N-1}\sum_{n=0}^{2ac-1}\frac{(-1)^{2ack+n}}{2ack+n+1}=\sum_{k=0}^{N-1}\sum_{n=0}^{2ac-1}\frac{(-1)^{n}}{2ack+n+1}.}
Now consider the blocks inside the series. We reorganize them  into a more convenient form for the purpose of explicitly seeing how $\phi$ changes the series, and note that we can consider each block separately in order to see what is going on with the series as a whole:
\al{&\sum_{n=0}^{2ac-1}\frac{(-1)^{n}}{2ack+n+1}= \sum_{n=0}^{2c-1}\frac{(-1)^{na+b+1}}{2ack+na+b}+\sum_{n=0}^{2a-1}\frac{(-1)^{nc+d+1}}{2ack+nc+d}\\
& +\Bigg(\sum_{n=0}^{2ac-1}\frac{(-1)^n}{2ack+n+1}-\sum_{n=0}^{2c-1}\frac{(-1)^{na+b+1}}{2ack+na+b}-\sum_{n=0}^{2a-1}\frac{(-1)^{nc+d+1}}{2ack+nc+d}\Bigg).
}
Out of the five sums, the first two sums are those that are moved away when we apply the permutation $\phi$, and the remaining terms are those that do not change. Note that for each $k$ there are $2c$ terms that are $b\bmod a$ in the set $\{2ack+n+1: 0\le n\le 2ac-1\}$:
\eq{2ack+b, 2ack+a+b, 2ack+2a+b,...,2ack+(2c-1)a+b,}
and $2a$ terms that are $d\bmod b$:
\eq{2ack+d, 2ack+c+d, 2ack+2c+d,...,2ack+(2a-1)c+d.}
When we apply $\phi,$ the first sum on the right of the equality containing $2c$ summands is exchanged with a sum of $2a$ terms by the rule
\eq{\phi(2ack+na+b)= 2c^2k+nc+d,}
the second sum of $2a$ summands will be exchanged with a sum of $2c$ terms by
\eq{\phi(2ack+nc+d)= 2a^2k+na+b,}
and the terms left over in the parentheses are exactly the ones that are fixed. Hence, the partial sum over $k$ of the rearranged series contains the terms

\al{&\sum_{n=0}^{2c-1}\frac{(-1)^{nc+d+1}}{2c^2k+nc+d}+\sum_{n=0}^{2a-1}\frac{(-1)^{na+b+1}}{2a^2k+na+b}\\
& +\Bigg(\sum_{n=0}^{2ac-1}\frac{(-1)^n}{2ack+n+1}-\sum_{n=0}^{2c-1}\frac{(-1)^{na+b+1}}{2ack+na+b}-\sum_{n=0}^{2a-1}\frac{(-1)^{nc+d+1}}{2ack+nc+d}\Bigg)\\
 = & \underbrace{\sum_{n=0}^{2c-1}\frac{(-1)^{nc+d+1}}{2c^2k+nc+d}}_{S_{(c,c)}(k)}+\underbrace{\sum_{n=0}^{2a-1}\frac{(-1)^{na+b+1}}{2a^2k+na+b}}_{S_{(a,a)}(k)}\\
& +\underbrace{\sum_{n=0}^{2c-1}\frac{(-1)^{na+b}}{2ack+na+b}}_{S_{(c,a)}(k)}+\underbrace{\sum_{n=0}^{2a-1}\frac{(-1)^{nc+d}}{2ack+nc+d}}_{S_{(a,c)}(k)} + \underbrace{\sum_{n=0}^{2ac-1}\frac{(-1)^n}{2ack+n+1}}_{AH(k)}.}
We have accounted for each term since $\phi$ is a bijection. It is now clear why we wanted the partial sum to be divisible by $2ac$: we move $2c$ and $2a$ terms out and bring $2c$ and $2a$ terms in, i.e., $2ac$ is divisible by $a$ and $c.$ Why we wanted an even number is pointed out later.

Next, we claim that the sum of the coefficients of $k$ in our new blocks is 0; if this is the case, we can consider a new infinite series that is absolutely convergent by Proposition 3, we compute the sum by the same proposition, and can finish the proof with property $(iii)$ of Proposition 2. We complete the proof by showing that the sum obtained is the same as the original rearranged sum by Proposition 1. This is simply because we can sum over blocks; together with the linearity property of sums for two convergent series, we know the sum is equal to the limit of the partial sums over each of the blocks. 

The following are true:
\eqn{\sum_{n=0}^{2a-1}\frac{(-1)^{na+b+1}}{2a^2}+\sum_{n=0}^{2c-1}\frac{(-1)^{na+b}}{2ac}=0,\label{(I)}}
\eqn{ \sum_{n=0}^{2c-1}\frac{(-1)^{nc+d+1}}{2c^2}+\sum_{n=0}^{2a-1}\frac{(-1)^{nc+d}}{2ac}=0,\label{(II)}}
and
\eqn{ \sum_{n=0}^{2ac-1}\frac{(-1)^n}{2ac}=0.\label{(III)}}
Indeed if $a$ is odd, then (\ref{(I)}) is zero since both sums are identically zero. Otherwise, the left hand side of (\ref{(I)}) is  \eq{\frac{2a(-1)^{b+1}}{2a^2}+\frac{2c(-1)^b}{2ac}=\frac{(-1)^{b+1}}{a}-\frac{(-1)^{b+1}}{a}=0.}
By completely symmetric computations, (\ref{(II)}) is $0$ as well. Finally, $(\ref{(III)})$ is just $0$. Now Proposition 3 may be applied since the sum of coefficients in (\ref{(I)}), (\ref{(II)}), and (\ref{(III)}) is zero. The proposition guarantees that the sum $\sum_{k=0}^\infty S_{(a,a)}(k)+S_{(c,a)}(k)$ converges to \eqn{\sum_{n=0}^{2a-1}\frac{(-1)^{na+b}}{2a^2}\psi\Big(\frac{na+b}{2a^2}\Big) + \sum_{n=0}^{2c-1}\frac{(-1)^{na+b+1}}{2ac}\psi\Big(\frac{na+b}{2ac}\Big),\label{I=}}
and by (\ref{(II)}), $\sum_{k=0}^\infty \Big(S_{(c,c)}(k)+S_{(a,c)}(k)\Big)$ converges to
\eqn{ \sum_{n=0}^{2c-1}\frac{(-1)^{nc+d}}{2c^2}\psi\Big(\frac{nc+d}{2c^2}\Big) + \sum_{n=0}^{2a-1}\frac{(-1)^{nc+d+1}}{2ac}\psi\Big(\frac{nc+d}{2ac}\Big).\label{II=}}
Finally, Proposition 1 tells us 
\eq{\sum_{k=0}^\infty AH(k)=\log(2)}
since we just took the limit of a subsequence of partial sums of the alternating harmonic series. Since each of these three series were convergent, we know that $(\ref{I=})+(\ref{II=})+\log(2)$ is the value of our rearranged series. So the final goal is to compute $(\ref{I=})+(\ref{II=}).$

Out of the four sums involving the digamma function that we need to compute, we compute one and note that the remaining computations are very similar. By splitting up the odd and even terms, one sees that
\aln{ \sum_{n=0}^{2c-1}\frac{(-1)^{nc+d}}{2c^2}\psi\Big(\frac{nc+d}{2c^2}\Big) & = \frac{(-1)^d}{2c^2}\sum_{n=0}^{2c-1}(-1)^{nc}\psi\Big(\frac{n}{2c}+\frac{d}{2c^2}\Big)\nonumber\\
 =\frac{(-1)^d}{2c^2}&\Bigg(\sum_{n=0}^{c-1}\psi\Big(\frac{n}{c}+\frac{d}{2c^2}\Big)+\sum_{n=0}^{c-1}(-1)^c\psi\Big(\frac{2n+1}{2c}+\frac{d}{2c^2}\Big)\Bigg).\label{star}
}
Now, (\ref{star}) equals
\eqn{
 \frac{(-1)^d}{2c^2}\Bigg(\sum_{n=0}^{c-1}\psi\Big(\frac{n}{c}+\frac{d}{2c^2}\Big)+(-1)^c\sum_{n=0}^{c-1}\psi\Big(\frac{n}{c}+\frac{d}{2c^2}+\frac{1}{2c}\Big)\Bigg).\label{starstar}
}
We apply identity $(iii)$ of the digamma function, namely
\eq{\sum_{j=0}^{m-1}\psi\left(q+\frac{j}{m}\right)=m(\psi(qm)-\log(m)),}
to see that (\ref{starstar}) is none other than
\al{ \frac{(-1)^d}{2c^2} & \Bigg(c\Big(\psi\Big(\frac{d}{2c}\Big)-\log(c)\Big)+ c(-1)^c \Big(\psi\Big(\frac{d}{2c}+\frac{1}{2}\Big)-\log(c)\Big)\Bigg)\\
& = \frac{(-1)^d}{2c}\Bigg(\psi\Big(\frac{d}{2c}\Big)+ (-1)^c\psi\Big(\frac{d}{2c}+\frac{1}{2}\Big)-\log(c) ((-1)^{c}+1)\Bigg).
}
By identical arguments, we can show that the other three series in (\ref{I=})+(\ref{II=}) equal
\aln{& \frac{(-1)^b}{2a}\Bigg(\psi\Big(\frac{b}{2a}\Big)+ (-1)^a\psi\Big(\frac{b}{2a}+\frac{1}{2}\Big)-\log(a) ((-1)^{a}+1)\Bigg),\label{star1}\\
&  \frac{(-1)^{b+1}}{2a}\Bigg(\psi\Big(\frac{b}{2a}\Big)+ (-1)^a\psi\Big(\frac{b}{2a}+\frac{1}{2}\Big)-\log(c) ((-1)^{a}+1)\Bigg),\label{star2}\\
&  \frac{(-1)^{d+1}}{2c}\Bigg(\psi\Big(\frac{d}{2c}\Big)+ (-1)^c\psi\Big(\frac{d}{2c}+\frac{1}{2}\Big)-\log(a) ((-1)^{c}+1)\Bigg).\label{star3}
}
Summing these \lq\lq simplifications," we see that all digamma terms cancel:
\al{(\ref{starstar})+(\ref{star1})+(\ref{star2})+(\ref{star3})&=\\
	\log\Big(\frac{c}{a}\Big)&\frac{(-1)^b+(-1)^{a+b}}{2a}+\log\Big(\frac{a}{c}\Big)\frac{(-1)^d+(-1)^{c+d}}{2c}.}
Adding the $\log(2)$ from the sum of $AH(k),$ we see that the rearrangement of the alternating harmonic series induced by $\phi$ converges to 
\eq{\log(2)+\log\Big(\frac{c}{a}\Big)\frac{(-1)^b+(-1)^{a+b}}{2a}+\log\Big(\frac{a}{c}\Big)\frac{(-1)^d+(-1)^{c+d}}{2c}.}
\end{proof}

\subsection{Permutations of order $m$ of the alternating harmonic series}

A slight generalization of Theorem 1, stated as a corollary because there are no new ideas in the proof, considers instead a class of permutations of order $m$:

\begin{cor} Let $a_i, b_i$ be positive integers with
\begin{itemize}
\item $b_i<a_i$ for $1\le i \le m,$
\item $\gcd(a_i, a_j)$ does not divide $b_i-b_j$ for distinct $1\le i,j\le m.$
\end{itemize}
If we permute the elements of the alternating harmonic series by \eq{a_1k+b_1\implies\cdots\implies a_{m-1}k+b_{m-1}\implies a_mk+b_m\implies a_1k+b_1,} i.e., if we define $\phi:\mn\to\mn$ by

\[ \phi (n) = \begin{cases} 
      a_2\frac{n-b_1}{a_1}+b_2 & \textrm{ if $n\equiv b_1\bmod a_1$} \\
	a_3\frac{n-b_2}{a_2}+b_3 & \textrm{ if $n\equiv b_2\bmod a_2$} \\
	&\vdots \\
    	  a_m\frac{n-b_{m-1}}{a_{m-1}}+b_m & \textrm{ if $n\equiv b_{m-1}\bmod a_{m-1}$} \\
	  a_1\frac{n-b_m}{a_m}+b_1 & \textrm{ if $n\equiv b_m\bmod a_m$} \\
	  n  & \textrm{otherwise},
   \end{cases} \]
then $\phi$ is a permutation of the positive integers and 

\eq{\sum_{n=1}^\infty \frac{(-1)^{\phi(n)+1}}{\phi(n)}=\log(2)+\frac{1}{2}\sum_{i=1}^{m}\log\Big(\frac{a_{i}}{a_{i+1}}\Big)\Bigg{[}\frac{(-1)^{b_{i+1}}+(-1)^{a_{i+1}+b_{i+1}}}{a_{i+1}}\Bigg{]},}
where $a_{m+1}=a_1$ and $b_{m+1}= b_1.$
\end{cor}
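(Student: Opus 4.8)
The plan is to mirror the proof of Theorem 1 almost verbatim, replacing the two-class swap by an $m$-class cycle. First I would check that $\phi$ is a well-defined bijection: the hypothesis that $\gcd(a_i,a_j)$ does not divide $b_i-b_j$ for $i\neq j$ guarantees, via the Chinese remainder criterion, that the residue classes $\{n\equiv b_i\bmod a_i\}$ are pairwise disjoint, so no integer is eligible for two of the defining rules. On each such class $\phi$ restricts to the bijection $a_it+b_i\mapsto a_{i+1}t+b_{i+1}$ onto the next class, these $m$ maps compose cyclically to the identity, and every remaining integer is fixed; hence $\phi$ is a permutation (of order dividing $m$) of $\mn$.

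Next I would fix the even modulus $M=2a_1a_2\cdots a_m$, which is divisible by each $a_i$, and split $\mn$ into the blocks $\{Mk+1,\dots,M(k+1)\}$. As in Theorem 1, within each block the partial sum of the rearranged series equals the full alternating-harmonic block $AH(k)$, minus the original ``out'' terms $O_i(k)$ running over the $M/a_i$ elements of class $i$, plus the ``in'' terms $I_i(k)$ obtained by applying $\phi$ to class $i$ (which land in class $i+1$ with leading $k$-coefficient $Ma_{i+1}/a_i$). I would regroup these contributions by the residue class they occupy, setting $G_i:=I_{i-1}-O_i$, a block of terms all congruent to $b_i\bmod a_i$; after the cyclic relabeling $\sum_i I_i=\sum_i I_{i-1}$ the rearranged series becomes $\sum_k AH(k)+\sum_{i=1}^m\sum_k G_i(k)$. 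Since $\phi$ is a bijection the rearranged terms tend to $0$, so Proposition 1 applies and gives $\sum_k AH(k)=\log 2$ and lets me pass from the partial sums at heights $MN$ to the full sum.

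The heart of the computation is to show each $\sum_k G_i(k)$ converges and to evaluate it. I would first verify the hypothesis of Proposition 3, that the coefficients of $k$ inside $G_i$ sum to zero, by the same parity split used for the vanishing-coefficient identities in Theorem 1's proof: if $a_i$ is odd both constituent sums vanish termwise, and if $a_i$ is even the two contributions are $\pm(-1)^{b_i+1}/a_i$. Proposition 3 then converts $\sum_k G_i(k)$ into a finite combination of digamma values, and splitting each inner sum into even and odd indices lets me apply Proposition 2$(iii)$; both $O_i$ and $I_{i-1}$ collapse onto the same two arguments $\psi(b_i/2a_i)$ and $\psi(b_i/2a_i+\tfrac12)$, but with opposite overall signs, so the digamma terms cancel and only logarithms survive. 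The half-counts feeding the $-\log(\cdot)$ of Proposition 2$(iii)$ are $M/2a_i$ for $O_i$ and $M/2a_{i-1}$ for $I_{i-1}$, so the common $\log(M/2)$ part cancels in the difference and what remains is $\log(a_{i-1}/a_i)\cdot\frac{(-1)^{b_i}+(-1)^{a_i+b_i}}{2a_i}$. Summing over $i$, reindexing $i\mapsto i+1$ to match the stated cyclic labels (with $a_{m+1}=a_1$, $b_{m+1}=b_1$), and adding the $\log 2$ from $\sum_k AH(k)$ yields exactly the claimed closed form.

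The main obstacle is bookkeeping rather than new mathematics, exactly as the ``no new ideas'' remark promises. I must track, uniformly in the parities of the $a_i$, which terms leave and which enter each residue class under the cycle, and then confirm two independent cancellations: the pairwise cancellation of the $\psi$-values inside each $G_i$, and the cancellation of the modulus-dependent $\log(M/2)$ (equivalently the spurious $\sum_l\log a_l$) across the difference $I_{i-1}-O_i$, so that only the clean ratios $\log(a_{i-1}/a_i)$ are left. Some care is also needed to confirm that $M$ being even makes the sign of each block independent of $k$, and that $M/a_i$ and $M/a_{i-1}$ are both even so the even/odd index splits are balanced; once these are in place the computation is a routine, if lengthy, generalization of the $m=2$ case.
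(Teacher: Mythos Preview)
Your proposal is correct and follows precisely the approach the paper intends: it explicitly says the proof ``is very similar to that of Theorem~1, but bulkier'' with ``no new ideas,'' and your outline---working modulo $M=2a_1\cdots a_m$, identifying the out/in blocks per residue class, checking the vanishing-coefficient hypothesis of Proposition~3 by the same parity split, collapsing via Proposition~2$(iii)$, and observing that the digamma values cancel pairwise in each $G_i=I_{i-1}-O_i$ leaving only $\log(a_{i-1}/a_i)$---is exactly that bulkier version. The bookkeeping caveats you flag (evenness of $M$ and of $M/a_i$, the cyclic reindexing) are the right ones and present no obstruction.
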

The proof is very similar to that of Theorem 1, but bulkier. We can apply the same techniques to compute rearrangements for conditionally convergent real series $\sum_n a_n$, where $a_n$ is equal to the quotient $P(n)/Q(n)$ of polynomials $P,Q$. Sometimes all the digamma terms cancel as in our case, but more often than not, they don't.

\section{Acknowledgments}
The author would very much like to thank Professor Michael J. Khoury for his encouragement and inspiration, as well as many examples of explicit rearrangements not found in the literature. (He also shared with me many interesting permutations of the alternating harmonic series that can be explicitly computed!) The author would also like to thank Soumyashant Nayak for insightful conversations. The referees were particularly helpful in terms of clarity of writing and examples, organization, and typos.

I would also like to thank Sarth Chavan from Euler Circle for encouraging me to post this on the arxiv.

\end{document}